\numberwithin{equation}{section}
\newtheorem{theorem}{Theorem}[section]
\newtheorem{lemma}[theorem]{Lemma}
\newtheorem{proposition}[theorem]{Proposition}
\theoremstyle{definition}
\theoremstyle{remark}
\numberwithin{equation}{section}
\newcommand{\C}{\mathbb{C}}
\newcommand{\R}{\mathbb{R}}
\newcommand{\Z}{\mathbb{Z}}
\begin{document}

\title{A symmetric function approach to polynomial regression}

\author[H.-C.~Herbig]{Hans-Christian Herbig}
\address{Departamento de Matem\'{a}tica Aplicada,
Av. Athos da Silveira Ramos 149, Centro de Tecnologia - Bloco C, CEP: 21941-909 - Rio de Janeiro, Brazil}
\email{herbighc@gmail.com}

\author[D.~Herden]{Daniel Herden}
\address{Department of Mathematics, Baylor University,
Sid Richardson Building,
1410 S.4th Street,
Waco, TX 76706, USA}
\email{daniel\_herden@baylor.edu}

\author[C.~Seaton]{Christopher Seaton}
\address{Department of Mathematics and Statistics,
Rhodes College, 2000 N. Parkway, Memphis, TN 38112, USA}
\email{seatonc@rhodes.edu}

\thanks{C.S. was supported by an
AMS-Simons Research Enhancement Grant for PUI Faculty and a
Rhodes College Faculty Development Grant.}

\keywords{polynomial regression, Schur polynomials, Vandermonde determinants}
\subjclass[2020]{Primary 05E05, 62J02; Secondary 65F05}

\begin{abstract}
We give an explicit solution formula for the polynomial regression problem in terms of Schur polynomials and Vandermonde determinants. We thereby generalize the work of Chang, Deng, and Floater to the case of model functions of the form $\sum _{i=1}^{n} a_{i} x^{d_{i}}$ for some integer exponents $d_{1}  >d_{2}  >\dotsc  >d_{n} \geq 0$ and phrase the results using Schur polynomials. Even though the solution circumvents the well-known problems with the forward stability of the normal equation, it is only of practical value if $n$ is small because the number of terms in the formula grows rapidly with the number $m$ of data points. The formula can be evaluated essentially without rounding.
\end{abstract}

\maketitle


\section{Introduction}
\label{sec:Intro}

Linear regression goes back to ideas of A.M. Legendre \cite{Legendre} and C.F. Gauß \cite{Gauss}, while
the first work on nonlinear regression is from 1815 and is due to J.D. Gergonne \cite{Gergonne}. Nowadays regression is implemented in most of the mathematical software as a numerical routine and is widely used in statistics and science.  The basic idea is to fit a model function, depending linearly on some parameters, to a set of data points in such a way that the sum of squares of the errors of the approximation is minimal.

In this paper we discuss the univariate case where the model function is a polynomial of the following form.
Let $\boldsymbol d=(d_{1} ,d_{2} ,\dotsc ,d_{n})$ be an $n$-tuple of integers such that $d_{1} > d_{2} > \dotsc > d_{n} \ge 0$. 
By a \emph{polynomial of type $\boldsymbol d$} we mean a univariate function of the form $f(x) =\sum _{i=1}^{n} a_{i} x^{d_{i}}$ with $a_{i}\in \C$. Let us assume that we have data points $\boldsymbol x=(x_{1} ,x_{2} ,\dotsc ,x_{m})^T$ and $\boldsymbol y=(y_{1} ,y_{2} ,\dotsc ,y_{m})^T\in \C^m$.
The interpolation problem $f(x_{i}) =y_{i}$ for $i=1,\dotsc ,m$ is equivalent to the linear system
\begin{align}\label{eq:k-interp}
 \begin{pmatrix}
x_{1}^{d_{1}} & x_{1}^{d_{2}} & \dotsc & x_{1}^{d_{n}}\\
x_{2}^{d_{1}} & x_{2}^{d_{2}} & \dotsc & x_{2}^{d_{n}}\\
\vdots  & & \ddots & \vdots \\
x_{m}^{d_{1}} & x_{m}^{d_{2}} & \dotsc  & x_{m}^{d_{n}}
\end{pmatrix}\begin{pmatrix}
a_{1}\\
a_{2}\\
\vdots \\
a_{n}
\end{pmatrix} =\begin{pmatrix}
y_{1}\\
y_{2}\\
\vdots \\
y_{m}
\end{pmatrix} .
\end{align}
Usually, this system is overdetermined $(m>n)$ and inconsistent.  Let us denote the coefficient matrix of this system by $A\in \C^{m\times n}$ and use the shorthand notation $A\boldsymbol a=\boldsymbol y$ for \eqref{eq:k-interp}. Then the associated \emph{normal equation}
\begin{align}\label{eq:k-normal}
A^*A\boldsymbol a=A^*\boldsymbol y,
\end{align}
is consistent, where $A^*=\overline{A}^T$ denotes the Hermitian transpose of $A$. In fact,  $\boldsymbol a=(a_{1} ,a_{2} ,\dotsc ,a_{n})^T$ is a solution to \eqref{eq:k-normal} if and only if $f(x) =\sum _{i=1}^{n} a_{i} x^{d_{i}}$ minimizes the distance $\sqrt{\sum_{i=1}^m|f(x_i)-y_i|^2}=|\!|A\boldsymbol a-\boldsymbol y|\!|_2=d(A\boldsymbol a,\boldsymbol y)$. Here we use the Hermitian inner product
$\langle \boldsymbol{u} | \boldsymbol{v}\rangle = \boldsymbol{u}^\ast\boldsymbol{v}$
on $\C^m$.\footnote{We will use the convention that an inner product is antilinear in the first argument. It has its origin in Dirac's bra-ket of quantum mechanics and is widely used in the mathematical literature (see, e.g. \cite{ReedSimon}). A reader only acquainted with the opposite convention is advised to read the formulas backwards.} For $\boldsymbol d=(n-1,n-2,\dotsc ,1,0)$ this is known as the problem of \emph{polynomial regression of degree $n-1$}. The resulting minimal distance $d(A\boldsymbol a,\boldsymbol y)$ is given by $\sqrt{ |\!|\boldsymbol y|\!|^2-\langle A^*\boldsymbol y| \boldsymbol a\rangle}$. Note that the coefficient matrix of \eqref{eq:k-normal},
\begin{align}\label{eq:Hankel}
A^*A=\left(\sum\limits _{l=1}^{m}\overline{x}_{l}^{d_{i}} x_{l}^{d_{j}}\right)_{i,j=1,\dots,n},
\end{align}
is the \emph{Hankel matrix} of the sequence of power sums if $\boldsymbol x\in\R^m$ and $\boldsymbol d=(n-1, n-2, \dotsc ,1,0)$.

In this paper we exploit the fact that the normal equation \eqref{eq:k-normal} is equivariant with respect to simultaneous permutation of the $x_i$s and the $y_i$s. It has a unique solution if and only if 
$A$ is injective
and we will use symmetric polynomials to provide concrete expressions for the solution (see Theorem \ref{thm:sol} below) in the case of injective $A$. If $\boldsymbol d=(n-1,n-2,\dotsc ,1,0)$ the solution is unique if and only if there are at least $n$ distinct values among the data points $x_i$. In fact, we will construct for any injective $A$ a matrix $B\in \mathbb{C}^{n\times \binom{m}{n-1}}$ such that the solution operator $A^{+}=(A^{*}A)^{-1}A^*$ to the least squares problem ${A\boldsymbol a=\boldsymbol y}$ (also known as the \emph{Moore-Penrose pseudoinverse}) can be written as
\begin{align}\label{eq:pseudo}
A^{+} =BB^{*} A^{*} =B( AB)^{*}.
\end{align}
The projection to the image of $A$ is then $P=AB( AB)^{*}$. Writing the complementary projection as $P^{\perp } =\boldsymbol{1} -P=\boldsymbol{1} -AB( AB)^{*}$ the minimal distance can be expressed as $d( A\boldsymbol{a} ,\boldsymbol{y}) =\sqrt{\langle \boldsymbol{y} | P^{\perp }\boldsymbol{y} \rangle }$.
The formula for $\boldsymbol d=(n-1,n-2,\dotsc ,1,0)$ was already elaborated previously by Chang, Deng, and Floater in \cite{ChinesePaper} for weighted least squares without making use of symmetric functions. The weighted case will be discussed in Section \ref{sec:weighted}.

At this stage it is already apparent that the approach has practical limitations. The problem is that the number of columns of $B$ is $\binom{m}{n-1}$, which, in the typical situation when $m\gg n$, is only reasonable for roughly the values $n=2,3,4,5$. The number of floating point operations (FLOPs) for the naive matrix multiplication $BB^{*} A^{*}$ is then $\mathcal{O}\left(n^2\binom{m}{n-1}\right)$. For $m\gg n\gg 0$ this number of operations can be estimated by $\mathcal{O}\left(\frac{n^{2}}{e}\left(\frac{em}{n}\right)^{n-1}\right)$. Similar estimates appear when counting the floating point operations for evaluating $B$ (see Section \ref{sec:Eff}).

From the point of view of stability, however, the situation is much better. The least squares problem has a sensitivity that is measured by $\kappa _{2}( A)^{2}$ if the minimal distance $d(A\boldsymbol a,\boldsymbol y)$ of the least squares approximation is large and $\kappa _{2}( A)$ if it is small
(see \cite[Theorem~20.1]{Higham}). Here $\kappa _{2}( A)=|\!|A|\!|_2|\!|A^+|\!|_2$ denotes the \emph{condition number} of the matrix $A$. 
In particular, the forward error for solving the normal equation directly is about $\kappa _{2}( A)^{2}$
(see \cite[Subsection 20.4]{Higham}).
Therefore, from the point of view of forward stability, the normal equation is considered problematic, in particular in situations where the conditioning $\kappa _{2}( A)$ can be large. It is known that for polynomial regression of degree $n-1$, where the matrices $A$ are of Vandermonde type this can be the case (see, for example, \cite{Pan}).
Our explicit solution does not have this defect. Problems with the stability can only occur in the polynomial evaluations and matrix multiplications involved in the solution formula. But in principle those calculations can be done essentially without rounding; division only occurs in the last step. When using floating point arithmetic the operations in the numerator and denominator of our formula concern merely addition and matrix multiplication,
which are not as problematic from the point of view of stability. However, if $m$ is large the number of these operations is huge.

\section*{Acknowledgments}
C.S. was supported by an AMS-Simons Research Enhancement Grant for PUI Faculty and a Rhodes College
Faculty Development Grant.


\section{The solution formula}
\label{sec:Formula}

We now recall the definition of Schur polynomials; see \cite[Section~I.3]{Macdonald}, \cite[Section~4.4]{Sagan}, or \cite[Section~7.10]{Stanley}. Any decreasing sequence $\boldsymbol \lambda=(\lambda_1,\dots,\lambda_{n})$ of integers $\lambda_1\ge \lambda_2\ge\ldots\ge \lambda_{n}\ge 0$ represents a partition of $N=\sum_{i=1}^n \lambda_i$ into at most $n$ parts. We denote by $\boldsymbol\delta(n)$ the partition $(n-1,n-2,\dots,1,0)$ of $N={n \choose 2}$. For any such $\boldsymbol \lambda$ we define the \emph{alternating polynomial}
\begin{align*}
a_{\boldsymbol\lambda}(\boldsymbol z):=|(z^{\lambda_j}_i)_{i,j=1,\dots, n}|
\end{align*}
for $\boldsymbol z=(z_1,\dots,z_{n}) \in \C^n$. Note that $a_{\boldsymbol\delta(n)}(\boldsymbol z)=\prod_{1\le i<j\le n}(z_{i}-z_{j})=:V(\boldsymbol z)$ is the \emph{Vandermonde determinant}.
The \emph{Schur polynomial} associated to $\boldsymbol \lambda=(\lambda_1,\dots,\lambda_{n})$ is
\begin{align*}
s_{\boldsymbol\lambda}(\boldsymbol z):=\frac{a_{\boldsymbol\lambda+\boldsymbol\delta(n)}(\boldsymbol z)}{a_{\boldsymbol\delta(n)}(\boldsymbol z)}=\frac{|( z_i^{\lambda_j+n-j}  )_{i,j=1,\dots,n}|}{V(\boldsymbol z)},
\end{align*}
where the partition $\boldsymbol\lambda+\boldsymbol\delta(n)$ results from the componentwise addition of $\boldsymbol\lambda$ and $\boldsymbol \delta(n)$.
This does indeed define a polynomial as every alternating polynomial is divisible by the Vandermonde determinant. In fact,  $s_{\boldsymbol \lambda}$ is a polynomial with non-negative integer coefficients that can be determined by counting out semistandard Young tableaux of shape $\boldsymbol\lambda$;
see \cite[Section~4.4]{Sagan} or \cite[Section~7.10]{Stanley}.

Given $\boldsymbol x:=(x_1,\dots,x_m)^T\in \C^m$ and $\boldsymbol k=(k_1,k_2,\dots,k_n)$  with $1\le k_1<k_2<\ldots<k_n\le m$, we write
$\boldsymbol x_{\boldsymbol k}:=(x_{k_1},\dots,x_{k_n})^T\in \C^n$. We can view $\boldsymbol k$ as an element of ${[m]\choose n}$, the collection of all $n$-element subsets of $[m]=\{1,2,\ldots,m\}$. We note the following immediate result.

\begin{proposition} \label{prop:DetPartitions}
For all $m,n \in \Z_{>0}$, partitions $\boldsymbol \lambda=(\lambda_1,\dots,\lambda_{n})$ and $\boldsymbol \nu=(\nu_1,\dots,\nu_{n})$ and $x_1,x_2,\ldots,x_m \in \C$, we have
\begin{align*}\left|\left(\sum\limits_{k=1}^m\overline x_k^{\lambda_i+n-i}  x_k^{\nu_j+n-j}\right)_{i,j=1,\dots,n}\right|
=\sum\limits_{\boldsymbol k\in {[m]\choose n}}
\overline{s_{\boldsymbol \lambda}(\boldsymbol x_{\boldsymbol k})}
s_{\boldsymbol \nu}(\boldsymbol x_{\boldsymbol k})|V(\boldsymbol x_{\boldsymbol k})|^2.
\end{align*}
\end{proposition}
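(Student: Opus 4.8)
The plan is to recognize the left-hand side as the determinant of a product of two rectangular matrices and then apply the Cauchy--Binet formula. First I would introduce the $n \times m$ matrix $P$ with entries $P_{i,k} = \overline{x}_k^{\lambda_i + n - i}$ and the $m \times n$ matrix $Q$ with entries $Q_{k,j} = x_k^{\nu_j + n - j}$. The $(i,j)$ entry of the product $PQ$ is then precisely $\sum_{k=1}^m \overline{x}_k^{\lambda_i + n - i}\, x_k^{\nu_j + n - j}$, so the left-hand side of the claim is exactly $\det(PQ)$.

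Next I would invoke the Cauchy--Binet formula, which gives $\det(PQ) = \sum_{\boldsymbol k \in \binom{[m]}{n}} \det(P_{\boldsymbol k})\,\det(Q_{\boldsymbol k})$, where $P_{\boldsymbol k}$ denotes the $n \times n$ submatrix of $P$ formed by the columns indexed by $\boldsymbol k$ and $Q_{\boldsymbol k}$ the $n \times n$ submatrix of $Q$ formed by the rows indexed by $\boldsymbol k$. The key step is to identify each of these minors with an alternating polynomial evaluated at $\boldsymbol x_{\boldsymbol k}$ or its conjugate. Transposing $P_{\boldsymbol k}$ (which does not change the determinant) puts its entries in the form $z_\ell^{\lambda_i+n-i}$ with $z_\ell = \overline{x}_{k_\ell}$, so that $\det(P_{\boldsymbol k}) = a_{\boldsymbol\lambda+\boldsymbol\delta(n)}(\overline{\boldsymbol x_{\boldsymbol k}})$, while $\det(Q_{\boldsymbol k}) = a_{\boldsymbol\nu+\boldsymbol\delta(n)}(\boldsymbol x_{\boldsymbol k})$ directly. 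By the definition of the Schur polynomial these equal $s_{\boldsymbol\lambda}(\overline{\boldsymbol x_{\boldsymbol k}})\,V(\overline{\boldsymbol x_{\boldsymbol k}})$ and $s_{\boldsymbol\nu}(\boldsymbol x_{\boldsymbol k})\,V(\boldsymbol x_{\boldsymbol k})$, respectively.

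Finally, since Schur polynomials and the Vandermonde determinant have integer, in particular real, coefficients, complex conjugation commutes with evaluation, so $s_{\boldsymbol\lambda}(\overline{\boldsymbol x_{\boldsymbol k}}) = \overline{s_{\boldsymbol\lambda}(\boldsymbol x_{\boldsymbol k})}$ and $V(\overline{\boldsymbol x_{\boldsymbol k}}) = \overline{V(\boldsymbol x_{\boldsymbol k})}$. Substituting these into the Cauchy--Binet sum and using $\overline{V(\boldsymbol x_{\boldsymbol k})}\,V(\boldsymbol x_{\boldsymbol k}) = |V(\boldsymbol x_{\boldsymbol k})|^2$ produces exactly the claimed right-hand side. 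The only point that really demands care---and hence the likely main obstacle---is the index bookkeeping: keeping straight which index plays the role of the variable and which the exponent in each minor, so that the transpose is applied to $P_{\boldsymbol k}$ and the conjugation lands on the correct factors. Beyond that, the argument is a routine application of Cauchy--Binet together with the definition of $s_{\boldsymbol\lambda}$.
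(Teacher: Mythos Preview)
Your proposal is correct and follows essentially the same argument as the paper: write the matrix as a product of two rectangular matrices, apply Cauchy--Binet, and identify each minor as $s_{\boldsymbol\lambda}(\overline{\boldsymbol x_{\boldsymbol k}})V(\overline{\boldsymbol x_{\boldsymbol k}})$ and $s_{\boldsymbol\nu}(\boldsymbol x_{\boldsymbol k})V(\boldsymbol x_{\boldsymbol k})$, then use that these polynomials have real coefficients to pull the conjugation outside. The only difference is that you spell out the transpose and the real-coefficient justification a bit more explicitly than the paper does.
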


\begin{proof}
By the Cauchy-Binet formula we have
\begin{align*}
\left|\left(\sum\limits_{k=1}^m\overline x_k^{\lambda_i+n-i}  x_k^{\nu_j+n-j}\right)_{i,j=1,\dots,n}\right|
&=\left|(\overline x_k^{\lambda_i+n-i})^T_{k\in [m], i\in [n]}(x_k^{\nu_j+n-j})_{k\in [m], j\in [n]}
\right|\\
&=\sum\limits_{1\le k_1<\ldots<k_n\le m}\left|(\overline x_{k_i}^{\lambda_j+n-j})^T_{i,j=1,\dots,n}\right|\left|(x_{k_i}^{\nu_j+n-j})_{i,j=1,\dots,n}
\right|\\
&=\sum\limits_{1\le k_1<\ldots<k_n\le m}
s_{\boldsymbol \lambda}(\overline{ \boldsymbol x}_{\boldsymbol k})V(\overline{ \boldsymbol x}_{\boldsymbol k})
s_{\boldsymbol \nu}(\boldsymbol x_{\boldsymbol k})V(\boldsymbol x_{\boldsymbol k})\\
&
=\sum\limits_{\boldsymbol k\in {[m]\choose n}}
\overline{s_{\boldsymbol \lambda}(\boldsymbol x_{\boldsymbol k})}
s_{\boldsymbol \nu}(\boldsymbol x_{\boldsymbol k})|V(\boldsymbol x_{\boldsymbol k})|^2. \qedhere
\end{align*}
\end{proof}

We associate to $\boldsymbol d=(d_{1} ,\dotsc ,d_{n})$ with $D=\sum_{i=1}^n d_i$ a partition $\boldsymbol \lambda=(\lambda_1,\dots,\lambda_{n})$ by $\boldsymbol \lambda = \boldsymbol d- \boldsymbol \delta(n)$, i.e., $\lambda_k:=d_k+k-n$ for $k=1,\dots, n$. Note that $\boldsymbol\lambda$ is a partition of $D-{n \choose 2}$. Moreover, let $\boldsymbol d\langle i\rangle=(d_1,\dots,d_{i-1},d_{i+1},\dots,d_n)$ and put $\boldsymbol \lambda[i]=\boldsymbol d\langle i\rangle -\boldsymbol \delta(n-1)$. We see that $\boldsymbol \lambda[i]$ is a partition of
$D-d_i-{n-1 \choose 2}$.
We apply Proposition \ref{prop:DetPartitions} to obtain the following result.

\begin{theorem} \label {thm:sol}\label{thm:schurformula} Let $x_{1} ,\dotsc ,x_{m}\in \C$ with $m\ge n$ such that the system \eqref{eq:k-normal} has a unique solution. Then, using the notation above, this unique solution $\boldsymbol a=(a_1,\dots,a_n)^T$ as a function of $\boldsymbol d,\boldsymbol x$ and $\boldsymbol y$ is given by
\begin{align}
\label{eq:aiSolGen}
a_i=\frac{\sum\limits_{j=1}^n (-1)^{i+j} \left( \sum\limits_{\boldsymbol l\in {[m]\choose {n-1}}}s_{\boldsymbol \lambda[i]}(\boldsymbol x_{\boldsymbol l})\overline{s_{\boldsymbol \lambda[j]}( \boldsymbol x_{\boldsymbol l})}
|V(\boldsymbol x_{\boldsymbol l})|^2 \right)\left(\sum\limits_{k=1}^m \overline x_k^{d_j}  y_k \right)
}{\sum\limits_{\boldsymbol k\in {[m]\choose n}}
|s_{\boldsymbol \lambda}(\boldsymbol x_{\boldsymbol k})V(\boldsymbol x_{\boldsymbol k})|^2}
\end{align}
for $i=1,\dots, n$. In particular, if there are at least $n$ distinct positive real values among $x_{1} ,\dotsc ,x_{m}$, then \eqref{eq:k-normal} has a unique solution.
\end{theorem}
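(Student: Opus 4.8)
The plan is to solve the normal equation \eqref{eq:k-normal} directly by Cramer's rule and then translate the resulting determinants via Proposition \ref{prop:DetPartitions}. Since \eqref{eq:k-normal} is assumed to have a unique solution, the Gram matrix $G:=A^*A$ is invertible and $\boldsymbol a=G^{-1}A^*\boldsymbol y$. Writing this entrywise with the adjugate formula gives
\[
a_i=\frac{1}{\det G}\sum_{j=1}^n(-1)^{i+j}M_{ji}\,(A^*\boldsymbol y)_j,
\]
where $M_{ji}$ is the minor of $G$ obtained by deleting row $j$ and column $i$, and $(A^*\boldsymbol y)_j=\sum_{k=1}^m\overline x_k^{\,d_j}y_k$ is exactly the rightmost factor in the numerator of \eqref{eq:aiSolGen}. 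It then remains to identify $\det G$ with the denominator and each $M_{ji}$ with the corresponding sum of Schur polynomials.

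First I would handle the denominator. Using $G_{i,j}=\sum_{l=1}^m\overline x_l^{\,d_i}x_l^{\,d_j}$ together with the relabeling $d_k=\lambda_k+n-k$ coming from $\boldsymbol\lambda=\boldsymbol d-\boldsymbol\delta(n)$, Proposition \ref{prop:DetPartitions} applied with $\boldsymbol\nu=\boldsymbol\lambda$ immediately yields $\det G=\sum_{\boldsymbol k\in\binom{[m]}{n}}|s_{\boldsymbol\lambda}(\boldsymbol x_{\boldsymbol k})|^2\,|V(\boldsymbol x_{\boldsymbol k})|^2$, which is the denominator of \eqref{eq:aiSolGen}.

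Next comes the numerator, which is the technical heart. Deleting row $j$ and column $i$ of $G$ leaves the $(n-1)\times(n-1)$ determinant of $\bigl(\sum_{l=1}^m\overline x_l^{\,d_p}x_l^{\,d_q}\bigr)$ with the row exponents $p$ ranging over the list $\boldsymbol d\langle j\rangle$ and the column exponents $q$ over $\boldsymbol d\langle i\rangle$. Because removing one entry from a strictly decreasing nonnegative sequence again yields such a sequence, $\boldsymbol d\langle j\rangle$ and $\boldsymbol d\langle i\rangle$ are valid exponent lists, and $\boldsymbol\lambda[j]=\boldsymbol d\langle j\rangle-\boldsymbol\delta(n-1)$, $\boldsymbol\lambda[i]=\boldsymbol d\langle i\rangle-\boldsymbol\delta(n-1)$ are genuine partitions. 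I would then apply Proposition \ref{prop:DetPartitions} with $n$ replaced by $n-1$, putting $\boldsymbol\lambda[j]$ in the conjugated ($\boldsymbol\lambda$) slot and $\boldsymbol\lambda[i]$ in the unconjugated ($\boldsymbol\nu$) slot, which gives $M_{ji}=\sum_{\boldsymbol l\in\binom{[m]}{n-1}}\overline{s_{\boldsymbol\lambda[j]}(\boldsymbol x_{\boldsymbol l})}\,s_{\boldsymbol\lambda[i]}(\boldsymbol x_{\boldsymbol l})\,|V(\boldsymbol x_{\boldsymbol l})|^2$. Substituting this and the factor $(A^*\boldsymbol y)_j$ into the adjugate formula reproduces \eqref{eq:aiSolGen}. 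The step I expect to require the most care is precisely this index bookkeeping: matching the deleted row/column of $G$ to the correct reduced exponent lists, checking that the shift $\lambda[\cdot]_p+(n-1)-p$ recovers $\boldsymbol d\langle\cdot\rangle$, and keeping the conjugation on the row index $j$ rather than the column index $i$ so that the signs $(-1)^{i+j}$ and the Schur factors pair up correctly.

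Finally, for the sufficient condition I would use that the normal equation has a unique solution if and only if $A$ is injective, i.e.\ $\det G\neq0$. By the denominator expression this holds as soon as a single term $s_{\boldsymbol\lambda}(\boldsymbol x_{\boldsymbol k})V(\boldsymbol x_{\boldsymbol k})$ is nonzero. Choosing $\boldsymbol k$ so that $x_{k_1},\dots,x_{k_n}$ are $n$ distinct positive reals makes $V(\boldsymbol x_{\boldsymbol k})\neq0$, while $s_{\boldsymbol\lambda}$, being a nonzero polynomial with nonnegative integer coefficients, evaluates to a strictly positive number on positive reals; hence that term is nonzero and the solution is unique.
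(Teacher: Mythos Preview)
Your proposal is correct and follows essentially the same route as the paper: Cramer/adjugate applied to $G=A^*A$, then Proposition~\ref{prop:DetPartitions} for both $\det G$ and each minor $M_{ji}$, together with the positivity of Schur polynomials on positive reals for the uniqueness clause. If anything, you are more explicit than the paper about the index bookkeeping identifying $M_{ji}$ with the $(n-1)$-variable Cauchy--Binet sum; the paper simply says ``a similar argument'' at that step.
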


\begin{proof}
The system \eqref{eq:k-normal} has a unique solution if and only if the matrix $A^*A$ is invertible, and we have $\boldsymbol a = (A^*A)^{-1}A^* \boldsymbol y$. Applying the cofactor formula for the inverse of $A^*A$ gives
\begin{align*}
a_i=\frac1{|A^*A|} \sum\limits_{j=1}^n (-1)^{i+j} M_{ji} (A^* \boldsymbol y)_j,
\end{align*}
where $M_{ji}$ denotes the determinant of the submatrix of $A^*A$ formed by deleting the $j$th row and $i$th column of $A^*A$ and
$(A^* \boldsymbol y)_j$ denotes the $j$th component of the vector $A^* \boldsymbol y$, which is given by
$(A^* \boldsymbol y)_j =\sum\limits_{k=1}^m \overline x_k^{d_j}  y_k$. Moreover, combining \eqref{eq:Hankel} and Proposition \ref{prop:DetPartitions} yields
\begin{align*}
|A^*A| &=
\left|\left(\sum\limits_{k=1}^m\overline x_k^{\lambda_i+n-i}  x_k^{\lambda_j+n-j}\right)_{i,j=1,\dots,n}\right| \\
&=\sum\limits_{\boldsymbol k\in {[m]\choose n}}
\overline{s_{\boldsymbol \lambda}(\boldsymbol x_{\boldsymbol k})}
s_{\boldsymbol \lambda}(\boldsymbol x_{\boldsymbol k})|V(\boldsymbol x_{\boldsymbol k})|^2
= \sum\limits_{\boldsymbol k\in {[m]\choose n}}
|s_{\boldsymbol \lambda}(\boldsymbol x_{\boldsymbol k})V(\boldsymbol x_{\boldsymbol k})|^2.
\end{align*}
A similar argument demonstrates that $M_{ji} = \sum\limits_{\boldsymbol l\in {[m]\choose {n-1}}}s_{\boldsymbol \lambda[i]}(\boldsymbol x_{\boldsymbol l})\overline{s_{\boldsymbol \lambda[j]}( \boldsymbol x_{\boldsymbol l})}
|V(\boldsymbol x_{\boldsymbol l})|^2$.

Finally, recall that $s_{\boldsymbol \lambda}$ is a polynomial with non-negative integer coefficients. Thus, $V(\boldsymbol x_{\boldsymbol k})$ is nonzero for any vector $\boldsymbol x_{\boldsymbol k}$ of distinct complex numbers, and $s_{\boldsymbol \lambda}(\boldsymbol x_{\boldsymbol k})$ is positive for any vector $\boldsymbol x_{\boldsymbol k}$ of positive real numbers.
\end{proof}


We can reformulate Theorem \ref{thm:schurformula} also in terms of the solution operator $A^{+}=BB^*A^*$ of the least squares problem, i.e., the Moore-Penrose pseudoinverse. The matrix $B\in \mathbb{C}^{n\times \binom{m}{n-1}}$ is given by the formula
\begin{align}\label{eq:B}
B_{i\boldsymbol{l}} :=\frac{( -1)^{i} a_{\boldsymbol{\lambda} [i]} (\boldsymbol{x}_{\boldsymbol{l}} )}{\sqrt{\sum\limits _{\boldsymbol{k} \in \binom{[m]}{n}} |a_{\boldsymbol{\lambda}} (\boldsymbol{x}_{\boldsymbol{k}} )|^{2}}}=\frac{( -1)^{i} s_{\boldsymbol{\lambda} [i]} (\boldsymbol{x}_{\boldsymbol{l}} )V(\boldsymbol{x}_{\boldsymbol{l}} )}{\sqrt{\sum\limits _{\boldsymbol{k} \in \binom{[m]}{n}} |s_{\boldsymbol{\lambda}} (\boldsymbol{x}_{\boldsymbol{k}} )V(\boldsymbol{x}_{\boldsymbol{k}} )|^{2}}},
\end{align}
where we understand the set $\binom{[ m]}{n-1}$ to be endowed with some fixed total order for indexing the columns of the matrix $B$.


\section{Weighted regression}
\label{sec:weighted}

What has been said in Section \ref{sec:Formula} can be easily adapted to regression with weighted least squares (see \cite{ChinesePaper}). Let us first investigate what happens to the normal equation if we work with a non-standard inner product.

\begin{lemma}
Let $ W\in \operatorname{GL}_{m}(\mathbb{C})$ be an invertible matrix and define the inner product \ $ \langle \boldsymbol{u} |\boldsymbol{v} \rangle^\prime :=\boldsymbol{u}^{*} W^{*} W\boldsymbol{v}= \langle W\boldsymbol{u} |W\boldsymbol{v} \rangle$ on $ \mathbb{C}^{m}$.
Let $ A^{\circledast }$ denote the adjoint of $ A\in \mathbb{C}^{m\times n}=\operatorname{Hom}\left(\mathbb{C}^{n} ,\mathbb{C}^{m}\right)$ with respect to $ \langle \ |\ \rangle^\prime $, i.e., $ \langle A^{\circledast }\boldsymbol{u} |\boldsymbol{x} \rangle =\langle \boldsymbol{u} |A\boldsymbol{x} \rangle^\prime$ for all $ \boldsymbol{u} \in \mathbb{C}^{m} ,\boldsymbol{x} \in \mathbb{C}^{n}$ . Then the normal equation $ A^{\circledast } A\boldsymbol{a} =A^{\circledast }\boldsymbol{ y}$ for $ \boldsymbol{a} \in \mathbb{C}^{n}, \boldsymbol{y} \in \mathbb{C}^{m}$ can be written as $ ( WA)^{\ast } \ WA\ \boldsymbol{a} =( WA)^{\ast } W\ \boldsymbol{y}$.
\end{lemma}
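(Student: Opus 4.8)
The plan is to determine the adjoint $A^{\circledast}$ as an explicit matrix and then substitute it into the normal equation; everything else is mechanical. The only ingredients are the defining relation for $A^{\circledast}$ together with the matrix descriptions of the two inner products involved: the standard product $\langle \boldsymbol{u}|\boldsymbol{v}\rangle=\boldsymbol{u}^*\boldsymbol{v}$ on the codomain $\mathbb{C}^n$ of $A^{\circledast}$, and the weighted product $\langle\,\cdot\,|\,\cdot\,\rangle'$ on $\mathbb{C}^m$.

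First I would expand the defining identity $\langle A^{\circledast}\boldsymbol{u}\,|\,\boldsymbol{x}\rangle=\langle \boldsymbol{u}\,|\,A\boldsymbol{x}\rangle'$. Writing the left-hand side with the standard product on $\mathbb{C}^n$ and the right-hand side with the weighted product gives $(A^{\circledast}\boldsymbol{u})^*\boldsymbol{x}=\boldsymbol{u}^*W^*WA\boldsymbol{x}$, valid for every $\boldsymbol{u}\in\mathbb{C}^m$ and every $\boldsymbol{x}\in\mathbb{C}^n$. Since $\boldsymbol{x}$ is arbitrary, the two $1\times n$ row vectors must agree, so $(A^{\circledast}\boldsymbol{u})^*=\boldsymbol{u}^*W^*WA$; taking the Hermitian transpose of both sides, using $(W^*W)^*=W^*W$, and invoking the arbitrariness of $\boldsymbol{u}$ yields $A^{\circledast}=A^*W^*W=(WA)^*W$.

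With this formula in hand the conclusion is immediate: $A^{\circledast}A=A^*W^*WA=(WA)^*(WA)$ while $A^{\circledast}\boldsymbol{y}=A^*W^*W\boldsymbol{y}=(WA)^*W\boldsymbol{y}$, so the normal equation $A^{\circledast}A\boldsymbol{a}=A^{\circledast}\boldsymbol{y}$ rewrites as $(WA)^*WA\,\boldsymbol{a}=(WA)^*W\,\boldsymbol{y}$, as claimed.

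The computation carries no genuine difficulty, and the one place demanding care is the bookkeeping of the conventions. One must keep in mind that the adjoint is taken with respect to \emph{two different} inner products --- the standard one on the domain $\mathbb{C}^n$ and the weighted one on the codomain $\mathbb{C}^m$ --- and that, under the convention that inner products are antilinear in the first argument, the passage from $(A^{\circledast}\boldsymbol{u})^*=\boldsymbol{u}^*W^*WA$ to $A^{\circledast}=A^*W^*W$ is a Hermitian transpose rather than an ordinary one. Keeping the conjugations straight is essentially the whole of the work.
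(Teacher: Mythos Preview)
Your proof is correct and follows essentially the same route as the paper's: both expand the defining relation for $A^{\circledast}$ in coordinates, read off $A^{\circledast}=A^*W^*W$ (the paper writes this as $A^*G^*$ with $G=W^*W$), and substitute into the normal equation. The only difference is cosmetic notation.
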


\begin{proof}
Let $ G =W^{*} W$ be the Gram matrix so that $ \langle \boldsymbol{u} |\boldsymbol{v} \rangle^\prime =\boldsymbol{u}^{*} G\boldsymbol{v}$. We have $ \boldsymbol{u}^{*}(A^{\circledast })^{*}\boldsymbol{x} = (A^{\circledast }\boldsymbol{u})^{*}\boldsymbol{x} = \boldsymbol{u}^{*} GA \boldsymbol{x}$ for all $ \boldsymbol{u} \in \mathbb{C}^{m} ,\boldsymbol{x} \in \mathbb{C}^{n}$ so that $ (A^{\circledast })^{*} =GA$ and $A^{\circledast } =A^{*}G^{*}$. 
Hence $ A^{\circledast } A\boldsymbol{a} =A^{*} G^{*} A\boldsymbol{a} =( WA)^{\ast } WA \  \boldsymbol{a}$ \ and $ A^{\circledast }\boldsymbol{ y}= A^{*} G^{*}\boldsymbol{y} =( WA)^{\ast } W \boldsymbol{y}$.
\end{proof}

We now assume that $ W=\operatorname{diag}( w_{1} ,\dotsc ,w_{m})$ for some weights $ w_{1} ,\dotsc ,w_{m} \in \mathbb{C}^{\times } =\C -\{0\}$.
Instead of minimizing $ $$ \langle A\boldsymbol{a} -\boldsymbol{y} |A\boldsymbol{a} -\boldsymbol{y} \rangle $ we are now going to minimize $\langle A\boldsymbol{a} -\boldsymbol{y} |A\boldsymbol{a} -\boldsymbol{y} \rangle^\prime =\langle WA\boldsymbol{a} -W\boldsymbol{y} |WA\boldsymbol{a} -W\boldsymbol{y} \rangle$, that is, to solve $ ( WA)^{\ast } WA\ \boldsymbol{a} =( WA)^{\ast } W\boldsymbol{y}$. We introduce the shorthand notation
$$ | w_{\boldsymbol{k}}| ^{2} :=\prod _{j=1}^{n}| w_{k_{j}}| ^{2},$$
where $ \boldsymbol{k} \in \binom{[m]}{n}$ is interpreted as $\boldsymbol k=(k_1,k_2,\dots,k_n)$  with $ 1\leq k_{1} < k_{2} < \dotsc < k_{n} \leq m$. A little modification of the argument of Proposition \ref{prop:DetPartitions} yields the formula
$$ \left| \left(\sum\limits _{k=1}^{m} |w_{k} |^{2}\overline{x}_{k}^{\lambda _{i} +n-i} x_{k}^{\nu _{j} +n-j}\right)_{i,j=1,\dotsc ,n}\right| =\sum\limits _{\boldsymbol{k} \in \binom{[m]}{n} \ }| w_{\boldsymbol{k}}| ^{2}\overline{s_{\boldsymbol{\lambda }}(\boldsymbol{x}_{\boldsymbol{k}})} s_{\boldsymbol{\nu }}(\boldsymbol{x}_{\boldsymbol{k}})| V(\boldsymbol{x}_{\boldsymbol{k}})| ^{2}.$$
As a consequence the solution formula \eqref{eq:aiSolGen} of Theorem \ref{thm:sol} becomes
$$ a_{i} =\frac{\sum\limits_{j=1}^n (-1)^{i+j}\left(\sum\limits _{\boldsymbol{l} \in \binom{[m]}{n-1} \ }| w_{\boldsymbol{l}}| ^{2}s_{\boldsymbol{\lambda }[ i]}(\boldsymbol{x}_{\boldsymbol{l}})\overline{s_{\boldsymbol{\lambda }[ j]}(\boldsymbol{x}_{\boldsymbol{l}})}| V(\boldsymbol{x}_{\boldsymbol{l}})| ^{2}\right)\left(\sum\limits _{k=1}^{m}| w_{k}| ^{2} \overline{x}_{k}^{d_{j}} y_{k}\right)}{\sum\limits _{\boldsymbol{k} \in \binom{[m]}{n} \ }| w_{\boldsymbol{k}} {s_{\boldsymbol{\lambda }}(\boldsymbol{x}_{\boldsymbol{k}})} V(\boldsymbol{x}_{\boldsymbol{k}})| ^{2}},$$
while the pseudoinverse turns into $ BB^{*}A^{*}W^*W$ with
\begin{align*}
B_{i\boldsymbol{l}} =\frac{( -1)^{i} w_{\boldsymbol{l}}s_{\boldsymbol{\lambda} [i]} (\boldsymbol{x}_{\boldsymbol{l}} )V(\boldsymbol{x}_{\boldsymbol{l}} )}{\sqrt{\sum\limits _{\boldsymbol{k} \in \binom{[m]}{n}} | w_{\boldsymbol{k}}s_{\boldsymbol{\lambda}} (\boldsymbol{x}_{\boldsymbol{k}} )V(\boldsymbol{x}_{\boldsymbol{k}} )|^{2}}}.
\end{align*}


\section{Examples}
\label{sec:ex}


\subsection{The case of polynomial regression}
\label{subsec:PolyRegression}

Consider the case $\boldsymbol{d} = (n-1,n-2,\dotsc ,1,0)$ of polynomial regression of degree $n-1$. In this case, $\lambda_k = 0$ for all $k$, and $\boldsymbol{\lambda}[i] = (\underset{i-1}{\underbrace{1,\ldots,1}},\underset{n-i}{\underbrace{0,\ldots,0}})$. Counting out semistandard Young tableaux, we have $s_{\boldsymbol{\lambda}} = 1$ and $s_{\boldsymbol{\lambda}[i]} = e_{i-1}$, the $(i-1)$th  \emph{elementary symmetric polynomial} with
\begin{align*}
e_{i-1}(x_1,x_2,\ldots,x_{n-1}) = \sum\limits _{\boldsymbol{l} \in \binom{[n-1]}{i-1} \ }\left( \prod _{j=1}^{i-1} x_{l_{j}}\right).
\end{align*}
Thus Equation \eqref{eq:aiSolGen} becomes
\begin{align}
\label{eq:aiSolPolyRegression}
a_i=\frac{\sum\limits_{j=1}^n (-1)^{i+j}  \left( \sum\limits_{\boldsymbol l\in {[m]\choose {n-1}}}e_{i-1}(\boldsymbol x_{\boldsymbol l})\overline{e_{j-1}({ \boldsymbol x}_{\boldsymbol l})}
|V(\boldsymbol x_{\boldsymbol l})|^2 \right) \left(\sum\limits_{k=1}^m \overline x_k^{n-j}  y_k \right)
}{\sum\limits_{\boldsymbol k\in {[m]\choose n}}
|V(\boldsymbol x_{\boldsymbol k})|^2}.
\end{align}


\subsection{Even polynomials of degree $\mathbf 4$}
\label{subsec:EvenPolyDeg4}

Consider the case $\boldsymbol{d} = (4, 2, 0)$ of fitting an even polynomial of the form $f(x) = a_1 x^4 + a_2 x^2 + a_3$.
In this case, $\boldsymbol{\lambda} = (2, 1, 0)$; $\boldsymbol{\lambda}[1] = (1,0)$; $\boldsymbol{\lambda}[2] = (3,0)$; and
$\boldsymbol{\lambda}[3] = (3,2)$; from which one computes
\begin{align*}
    s_{\boldsymbol{\lambda}}(x_1,x_2,x_3)
        &=      (x_1 + x_2)(x_1 + x_3)(x_2 + x_3),
    \\
    s_{\boldsymbol{\lambda}[1]}(x_1,x_2)
        &=      x_1 + x_2,
    \\
    s_{\boldsymbol{\lambda}[2]}(x_1,x_2)
        &=      (x_1 + x_2)(x_1^2 + x_2^2),
    \\
    s_{\boldsymbol{\lambda}[3]}(x_1,x_2)
        &=      x_1^2 x_2^2(x_1 + x_2).
\end{align*}
Then Equation~\eqref{eq:aiSolGen} yields
\begin{align*}
    a_1 &=      \frac{\sum\limits_{j=1}^3 (-1)^{j+1}
                \left( \sum\limits_{\boldsymbol l\in {[m]\choose 2}}s_{\boldsymbol \lambda[1]}(\boldsymbol x_{\boldsymbol l})
                    \overline{s_{\boldsymbol \lambda[j]}(\boldsymbol x_{\boldsymbol l})} |V(\boldsymbol x_{\boldsymbol l})|^2 \right)
                \left( \sum\limits_{k=1}^m \overline x_k^{d_j}  y_k  \right) }
                {\sum\limits_{\boldsymbol k\in {[m]\choose 3}} |s_{\boldsymbol \lambda}(\boldsymbol x_{\boldsymbol k})V(\boldsymbol x_{\boldsymbol k})|^2}
        \\&=    \frac{
                \sum\limits_{\boldsymbol l\in {[m]\choose 2}}
                    |x_{l_1}^2 - x_{l_2}^2|^2
                    \sum\limits_{k=1}^m \Big(
                        \overline{x}_k^4  y_k
                        - (\overline{x}_{l_1}^2 + \overline{x}_{l_2}^2) \overline{x}_k^2  y_k
                        + \overline{x}_{l_1}^2 \overline{x}_{l_2}^2  y_k \Big) }
                {\sum\limits_{\boldsymbol k\in {[m]\choose 3}} \big| (x_{k_1}^2 - x_{k_2}^2)(x_{k_1}^2 - x_{k_3}^2)(x_{k_2}^2 - x_{k_3}^2)\big|^2},
\end{align*}
\begin{align*}
    a_2 &=      \frac{\sum\limits_{j=1}^3 (-1)^{j}
                \left( \sum\limits_{\boldsymbol l\in {[m]\choose 2}}s_{\boldsymbol \lambda[2]}(\boldsymbol x_{\boldsymbol l})
                    \overline{s_{\boldsymbol \lambda[j]}(\boldsymbol x_{\boldsymbol l})} |V(\boldsymbol x_{\boldsymbol l})|^2 \right)
                \left( \sum\limits_{k=1}^m \overline{x}_k^{d_j}  y_k \right) }
                {\sum\limits_{\boldsymbol k\in {[m]\choose 3}} |s_{\boldsymbol \lambda}(\boldsymbol x_{\boldsymbol k})V(\boldsymbol x_{\boldsymbol k})|^2}
    \\&=        \frac{-
                \sum\limits_{\boldsymbol l\in {[m]\choose 2}}|x_{l_1}^2 - x_{l_2}^2|^2 (x_{l_1}^2 + x_{l_2}^2)
                    \sum\limits_{k=1}^m \Big(
                        \overline{x}_k^4 y_k
                        - (\overline{x}_{l_1}^2 + \overline{x}_{l_2}^2) \overline{x}_k^2 y_k
                        + \overline{x}_{l_1}^2 \overline{x}_{l_2}^2 y_k
                    \Big)  }
                {\sum\limits_{\boldsymbol k\in {[m]\choose 3}} \big| (x_{k_1}^2 - x_{k_2}^2)(x_{k_1}^2 - x_{k_3}^2)(x_{k_2}^2 - x_{k_3}^2)\big|^2},
\end{align*}
and
\begin{align*}
    a_3 &=      \frac{\sum\limits_{j=1}^3 (-1)^{j+1}
                \left( \sum\limits_{\boldsymbol l\in {[m]\choose 2}}s_{\boldsymbol \lambda[3]}(\boldsymbol x_{\boldsymbol l})
                    \overline{s_{\boldsymbol \lambda[j]}(\boldsymbol x_{\boldsymbol l})} |V(\boldsymbol x_{\boldsymbol l})|^2 \right)
                \left( \sum\limits_{k=1}^m \overline{x}_k^{d_j}  y_k \right) }
                {\sum\limits_{\boldsymbol k\in {[m]\choose 3}} |s_{\boldsymbol \lambda}(\boldsymbol x_{\boldsymbol k})V(\boldsymbol x_{\boldsymbol k})|^2}
    \\&=        \frac{\sum\limits_{\boldsymbol l\in {[m]\choose 2}} |x_{l_1}^2 - x_{l_2}^2|^2 x_{l_1}^2 x_{l_2}^2
                    \sum\limits_{k=1}^m \Big(
                        \overline{x}_k^4 y_k
                        - (\overline{x}_{l_1}^2 + \overline{x}_{l_2}^2) \overline{x}_k^2 y_k
                        + \overline{x}_{l_1}^2 \overline{x}_{l_2}^2 y_k
                    \Big) }
                {\sum\limits_{\boldsymbol k\in {[m]\choose 3}} \big| (x_{k_1}^2 - x_{k_2}^2)(x_{k_1}^2 - x_{k_3}^2)(x_{k_2}^2 - x_{k_3}^2)\big|^2}.
\end{align*}
The matrix entry $B_{i\boldsymbol{l}}$ is given by
\begin{align*}
    B_{i\boldsymbol{l}}
    =\frac{(-1)^{i} (x_{l_1} - x_{l_2}) s_{\boldsymbol{\lambda} [i]} (\boldsymbol{x}_{\boldsymbol{l}})}
                {\sqrt{\sum\limits_{\boldsymbol{k} \in \binom{[m]}{3}}
                    \big| (x_{k_1}^2 - x_{k_2}^2)(x_{k_1}^2 - x_{k_3}^2)(x_{k_2}^2 - x_{k_3}^2)\big|^2 }},
\end{align*}
i.e.,
\begin{align*}
    B_{1,\boldsymbol{l}}
    &=          \frac{ x_{l_2}^2 - x_{l_1}^2 }
                {\sqrt{\sum\limits_{\boldsymbol{k} \in \binom{[m]}{3}}
                    \big| (x_{k_1}^2 - x_{k_2}^2)(x_{k_1}^2 - x_{k_3}^2)(x_{k_2}^2 - x_{k_3}^2)\big|^2 }},
    \\
    B_{2,\boldsymbol{l}}
    &=          \frac{ x_{l_1}^4 - x_{l_2}^4 }
                {\sqrt{\sum\limits_{\boldsymbol{k} \in \binom{[m]}{3}}
                    \big| (x_{k_1}^2 - x_{k_2}^2)(x_{k_1}^2 - x_{k_3}^2)(x_{k_2}^2 - x_{k_3}^2)\big|^2 }},
    \\
    B_{3,\boldsymbol{l}}
    &=          \frac{ x_{l_1}^2 x_{l_2}^2 (x_{l_2}^2 - x_{l_1}^2)}
                {\sqrt{\sum\limits_{\boldsymbol{k} \in \binom{[m]}{3}}
                    \big| (x_{k_1}^2 - x_{k_2}^2)(x_{k_1}^2 - x_{k_3}^2)(x_{k_2}^2 - x_{k_3}^2)\big|^2 }}.
\end{align*}


\subsection{Fitting power functions}
\label{subsec:Power}

Suppose $\boldsymbol{d} = (d)$, i.e., the function to fit is the power function $f(x) = ax^d$.
Then $\boldsymbol{\lambda} = \boldsymbol{d} = (d)$ so that $s_{\boldsymbol{\lambda}}(x) = x^d$, and $\boldsymbol{d}\langle 1 \rangle = \boldsymbol{\lambda}[1]$
is the empty partition. Equation~\eqref{eq:aiSolGen} becomes
\[
    a   
        =   \frac{\sum\limits_{k=1}^m \overline x_k^{d} y_k}
            {\sum\limits_{k=1}^m | x_k|^{2d} }.
\]






\section{Complexity}
\label{sec:Eff}


Here we examine the complexity of possible evaluations of Equation \eqref{eq:pseudo}. The simple message is that, for fixed $ n$  and $d_1$, it can be estimated asymptotically as $ \mathcal{O}( m^{n})$ for $ m\rightarrow \infty $ (for an introduction to asymptotic analysis the reader may consult \cite{concrete}).
The dominating contributions come from the matrix multiplication and the evaluation of $|A^*A|$ in the denominators. The coefficients of the estimates depend on $ n$ and $ d_{1}$, and the details of these dependencies depend on the details of the used evaluation method. The presentation is hopefully intelligible for people that are not accustomed to counting floating point operations (FLOPs).
We have to conclude that for $n>3$ evaluating our formulas is not competitive with the numerical algorithms commonly used.

First, note that the complexity of naively evaluating the product $XY$ of two matrices $ X\in \mathbb{C}^{m\times n}$ and $Y\in \mathbb{C}^{n\times p}$ is $ \operatorname{\mathcal{O}}(mnp)$. So the complexity of naively multiplying $ (BB^{*}) A^{*}$ left-to-right is $ \operatorname{\mathcal{O}}\left( n^2\binom{m}{n-1}+n^2m\right) =\operatorname{\mathcal{O}}\left( n^2\binom{m}{n-1}\right)$ as $ B\in \mathbb{C}^{n\times \binom{m}{n-1}} ,B^{*} \in \mathbb{C}^{\binom{m}{n-1} \times n} ,A^{*} \in \mathbb{C}^{n\times m}$.
For positive integers $ M,N$ we have the well-known inequality $ \left(\frac{M}{N}\right)^{N} \leq \binom{M}{N} \leq \left(\frac{eM}{N}\right)^{N}$; see \cite[Part VIII, Appendix C]{Cormen}. In particular, we can estimate
$$ n^2\binom{m}{n-1} \leq n^2\left(\frac{em}{n-1}\right)^{n-1}.$$
Now we use that for $ m\gg n\gg 0$ we have
 \begin{align} \label{eq:binom}
\left(\frac{em}{n-1}\right)^{n-1} = \left(\frac{em}{n}\right)^{n-1}\left(\frac{n}{n-1}\right)^{n-1} \sim e\left(\frac{em}{n}\right)^{n-1}.
 \end{align}
From this, we deduce $ \operatorname{\mathcal{O}}\left( n^2\binom{m}{n-1}\right) =\operatorname{\mathcal{O}}\left(en^{2}\left(\frac{em}{n}\right)^{n-1}\right) =\operatorname{\mathcal{O}}\left(n^{2}\left(\frac{em}{n}\right)^{n-1}\right)$.

Now we would like to investigate the complexity of evaluating the matrix $ B$ of Equation \eqref{eq:B}.
In the denominator we have to evaluate $s_{\boldsymbol{\lambda }} (\boldsymbol{x}_{\boldsymbol{k}} )$. We could do that using a multivariate Horner's scheme for evaluating a polynomial $ f$ in $n$ variables, the number of FLOPs being $ 2\binom{\deg( f) +n}{n} -2$; see \cite[Theorem 3.1]{Czekansky}. Hence with $\deg( s_{\boldsymbol{\lambda }} (\boldsymbol{x}_{\boldsymbol{k}} )) =|\boldsymbol{\lambda } |=\sum _{k=1}^{n}( d_{k} +k-n) =\binom{n+1}{2} -n^{2} +\sum _{k=1}^{n} d_{k} =-\binom{n}{2} +\sum _{k=1}^{n} d_{k}$ this is $ 2\binom{|\boldsymbol{\lambda } |+n}{n} -2$, which is neither a polynomial in $|\boldsymbol{\lambda } |$ nor $d_1$ nor $n$. 
A better evaluation method for $s_{\boldsymbol{\lambda }} (\boldsymbol{x}_{\boldsymbol{k}} )$ seems to be the second Jacobi-Trudi identity (see \cite[Equation (3.5)]{Macdonald}), expressing $s_{\boldsymbol{\lambda }} (\boldsymbol{x}_{\boldsymbol{k}} )$ as the determinant of a $ \lambda _{1} \times \lambda _{1}$ matrix in the elementary symmetric polynomials $e_i (\boldsymbol{x}_{\boldsymbol{k}} )$. To evaluate these elementary symmetric polynomials one has to expand
\[
( X-x_{k_1})( X-x_{k_2}) \cdots ( X-x_{k_n}) = X^n-e_1 (\boldsymbol{x}_{\boldsymbol{k}} )X^{n-1}+e_2 (\boldsymbol{x}_{\boldsymbol{k}} )X^{n-2}-e_3 (\boldsymbol{x}_{\boldsymbol{k}} )X^{n-3}\pm \ldots +(-1)^n e_n (\boldsymbol{x}_{\boldsymbol{k}} ).
\]
If $ ( X-x_{k_1})( X-x_{k_2}) \cdots ( X-x_{k_{i-1}})$ is already expanded, then expanding
$ ( X-x_{k_1})( X-x_{k_2}) \cdots ( X-x_{k_{i-1}}) \ \cdot \ ( X-x_{k_{i}})$ just needs
$ i-1$ extra multiplications and $ i-1$ extra additions, i.e., \ $ 2( i-1)$ more FLOPs.
Hence evaluating the elementary symmetric polynomials costs
$$ \sum\limits _{i=1}^{n}2(i-1) =n(n-1)=\mathcal{O}( n^{2})$$
FLOPs. Calculating the determinant costs in the worst case $ \mathcal{O}( \lambda _{1}^{3})$ FLOPs (see \cite{Higham}). For the specific Schur polynomials in question $ \lambda _{1} = d_{1} + 1 - n$, and the cost of evaluating such a Schur polynomial can be estimated as $ \mathcal{O}( n^{2} + (d_{1} + 1 - n)^{3})$.
Evaluating the Vandermonde determinant $V (\boldsymbol{x}_{\boldsymbol{k}} )$ costs $2\binom{n}{2}-1=\mathcal{O}(n^2)$ FLOPs, and hence evaluating 
each term $s_{\boldsymbol{\lambda }} (\boldsymbol{x}_{\boldsymbol{k}}) V (\boldsymbol{x}_{\boldsymbol{k}} )$ in the denominator of Equation \eqref{eq:B} costs $ \mathcal{O}( 2n^{2} + (d_{1} + 1 - n)^{3})$ FLOPs.
So the complexity of evaluating the full denominator is
$$ \mathcal{O}\left( \binom{m}{n} \left(2n^{2} + (d _{1} + 1 - n)^{3}\right)\right) =\mathcal{O}\left( \left(\frac{em}{n}\right)^{n} \left(n^{2} + (d _{1} + 1 - n)^{3}\right)\right),$$
which is larger than
that of the matrix multiplication. The numerator of Equation \eqref{eq:B} can be analyzed along similar lines, yielding with \eqref{eq:binom} an estimate
\begin{align*}
 & \mathcal{O}\left(n\binom{m}{n-1} \left(3(n-1)^2+ 2(d_{1}+2-n)^{3}\right)+nm(d_1+1)\right)\\ & =\mathcal{O}\left(ne\left(\frac{em}{n}\right)^{n-1} \left(3(n-1)^2+ 2(d_{1}+2-n)^{3}\right) \right)\\
 & =\mathcal{O}\left(n\left(\frac{em}{n}\right)^{n-1} \left((n-1)^2+ (d_{1}+2-n)^{3}\right) \right).
\end{align*}
For $ m\gg n > 2$ the cost of evaluating the numerator is negligible in comparison to the contribution of the denominator.


\subsection{A numerical example}
\label{subsec:num}
Let us look into a concrete example of the evaluation that we implemented in Mathematica \cite{mathematica} for $\boldsymbol{d} =( 4,2,0)$, i.e., the model function to fit is again $f( x) =a_{1} x^{4} +a_{2} x^{2} +a_{3}$ with the $a_i\in\R$.

To give a practical application, this could, for example, be taken as a model function used for digital image processing of a human mouth. Let us assume for the moment $f(x)$ to be non-constant. From $f'( x) =4a_{1} x^{3} +2a_{2} x=2x\left( 2a_{1} x^{2} +a_{2}\right)$ we deduce that for $a_{1} a_{2} \geq 0$ there is just one extremum. Otherwise there are three extrema $x=0$ and $x=\pm \sqrt{-a_{2}/(2a_{1})}$. 
From the sign of $a_{1} a_{2}$ a machine could infer if the mouth is smiling (i.e., $a_{1} a_{2}  \ge 0$) or frowning (i.e., $a_{1} a_{2} < 0$)
 \footnote{In physics this type of model functions plays a role in the study of phase transitions associated to the phenomenon of spontaneous symmetry breaking.}.

 To investigate empirically the running time of the evaluation of our Formula \eqref{eq:aiSolGen} we created data points by superimposing random noise on the evaluations of
\begin{align}\label{eq:numfunc}
g(x) =x^{4} -\frac{5}{2} \cdot 10^{5} \ x^{2}.
\end{align}
The range for $x$ is here $[ -500,500] \cap \ell\mathbb{Z}$. In Figure \ref{fig:101pts} we show an example of a regression obtained this way with $\ell=10$, i.e., $m=101$ data points.
\begin{figure}[!htbp]
    \centering
    \includegraphics[width=0.8\textwidth]{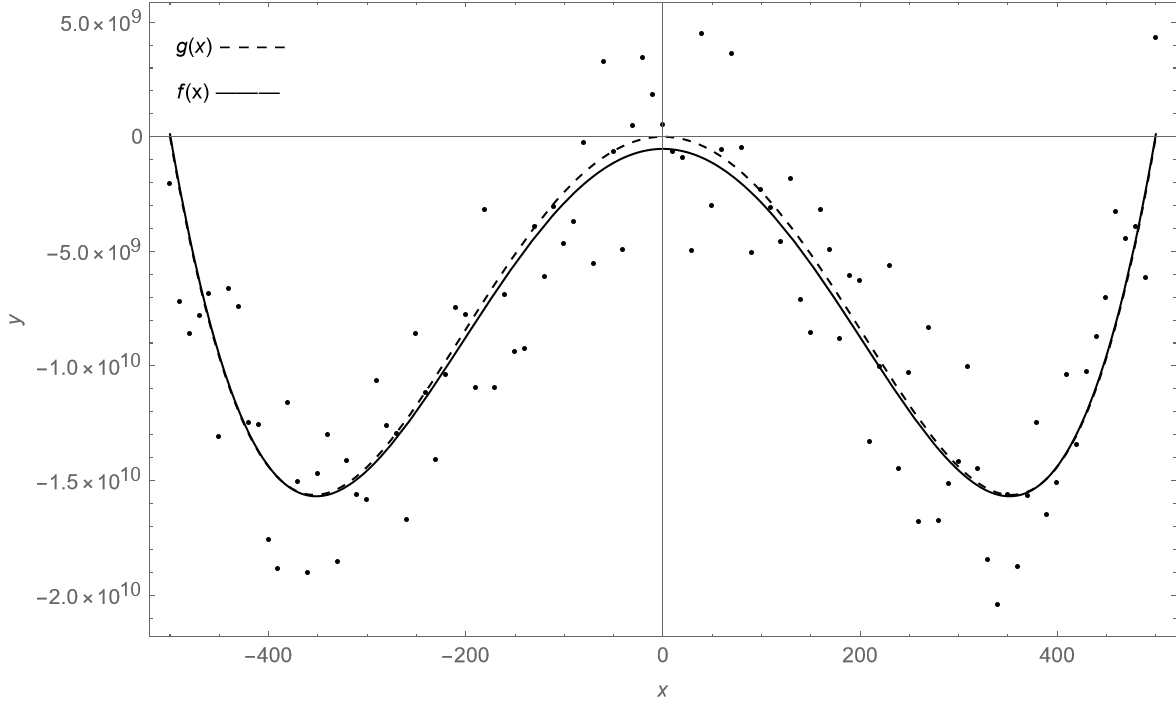}
    \caption{
Our numerical example with $\ell=10$. The dashed graph is the one of the function $g( x)$ of Equation \eqref{eq:numfunc}, the solid one is the graph of the resulting regression $f(x)$.}
    \label{fig:101pts}
\end{figure}
In Figure \ref{fig:loglog} we depict how the running time of our implementation rises when we  increase the number $m$ of data points.
\begin{figure}[!htbp]
    \centering
    \includegraphics[width=0.6\textwidth]{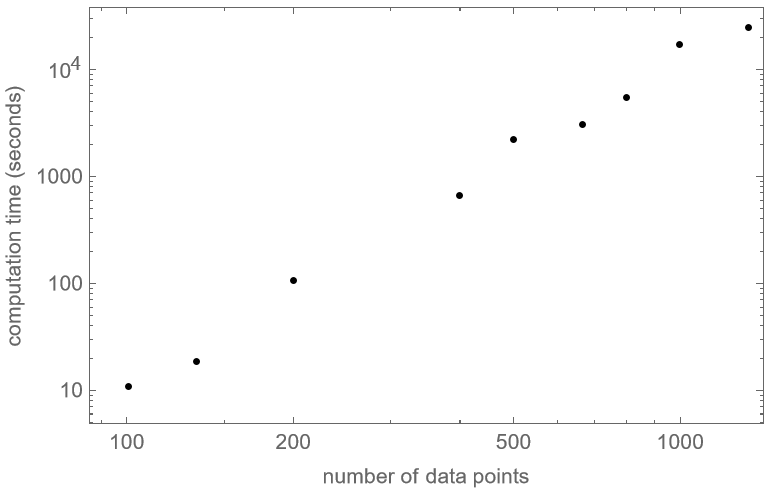}
    \caption{Here we put a log-log plot of the running time (measured in seconds) of our implementation on the $y$-axis against the number of data points on the $x$-axis. The slope equals approximately $3$, verifying empirically our predicted complexity of the evaluation as $\mathcal{O}(m^3)$.}
    \label{fig:loglog}
\end{figure}


\section{Recursive evaluations}
\label{sec:Realtime}

Observe that, when fitting a fixed polynomial $f(x) =\sum_{i=1}^{n} a_{i} x^{d_{i}}$ to a large data set or multiple data sets,
the partitions $\boldsymbol \lambda$ and $\boldsymbol \lambda[i]$, $i=1,\ldots,n$, depend only on $\boldsymbol d$. Therefore, the polynomials $s_{\boldsymbol \lambda[i]}$ and
$s_{\boldsymbol \lambda}$ can be computed (e.g., using the second Jacobi-Trudi identity) and stored once. In addition, if a data set is
enlarged, Equation~\eqref{eq:aiSolGen} can be used recursively as we now describe. In this way, amending one additional data point
to a data set of $m$ data points reduces the complexity of the evaluations from $\mathcal O(m^n)$ to $\mathcal O(m^{n-1})$ for large $m$.

Suppose that one has applied Equation~\eqref{eq:aiSolGen} to a collection of data points $\boldsymbol{x} = (x_1,x_2,\ldots,x_m)^T$ and
$\boldsymbol{y} = (y_1,y_2,\ldots,y_m)^T$. In this process, one stores not only the $a_i$, but the following quantities.
For $i = 1, \ldots, n$ and $j = 1,\ldots, n$, define
\[
    \mathcal{S}_{i,j}(\boldsymbol{x})
        =   \sum\limits_{\boldsymbol l\in {[m]\choose {n-1}}}s_{\boldsymbol \lambda[i]}(\boldsymbol x_{\boldsymbol l})\overline{s_{\boldsymbol \lambda[j]}
            ( \boldsymbol x_{\boldsymbol l})} |V(\boldsymbol x_{\boldsymbol l})|^2,
\]
\[
    \mathcal{T}_{j}(\boldsymbol{x},\boldsymbol{y})
        =   \sum\limits_{k=1}^m \overline x_k^{d_j}  y_k,
\]
and let
\[
    \mathcal{N}_i(\boldsymbol{x},\boldsymbol{y})
        =   \sum\limits_{j=1}^n (-1)^{i+j}\mathcal{S}_{i,j}(\boldsymbol{x})
            \mathcal{T}_{j}(\boldsymbol{x},\boldsymbol{y})
\]
and
\[
    \mathcal{D}(\boldsymbol{x})
        =   \sum\limits_{\boldsymbol k\in {[m]\choose n}} |s_{\boldsymbol \lambda}(\boldsymbol x_{\boldsymbol k}) V(\boldsymbol x_{\boldsymbol k})|^2
\]
denote the numerators and denominator in the expression for $a_i$ in Equation~\eqref{eq:aiSolGen}.
Enlarging the data set by one point, let
$\boldsymbol{x}^\prime = (x_1, x_2, \ldots, x_m, x_{m+1})^T$ and $\boldsymbol{y}^\prime = (y_1, y_2, \ldots, y_m, y_{m+1})^T$.
We express $\mathcal{N}_i(\boldsymbol{x}^\prime, \boldsymbol{y}^\prime)$ as
\begin{align*}
            \sum\limits_{j=1}^n & (-1)^{i+j}
                \left(\sum\limits_{\boldsymbol l\in {[m+1]\choose {n-1}}}s_{\boldsymbol \lambda[i]}(\boldsymbol x_{\boldsymbol l})\overline{s_{\boldsymbol \lambda[j]}
                    ( \boldsymbol x_{\boldsymbol l})} |V(\boldsymbol x_{\boldsymbol l})|^2\right)\left(\sum\limits_{k=1}^{m+1} \overline x_k^{d_j}  y_k\right)
    \\&=    \sum\limits_{j=1}^n (-1)^{i+j}
                \left(\mathcal{S}_{i,j}(\boldsymbol{x},\boldsymbol{y}) +
                \sum\limits_{\boldsymbol l\in {[m]\choose {n-2}}}s_{\boldsymbol \lambda[i]}(\boldsymbol x_{\boldsymbol l,m+1})\overline{s_{\boldsymbol \lambda[j]}
                    ( \boldsymbol x_{\boldsymbol l,m+1})} |V(\boldsymbol x_{\boldsymbol l,m+1})|^2 \right)\left( \mathcal{T}_{j}(\boldsymbol{x},\boldsymbol{y}) +\overline x_{m+1}^{d_j}  y_{m+1} \right)
    \\&=    \mathcal{N}_i(\boldsymbol{x},\boldsymbol{y}) +
            \sum\limits_{j=1}^n (-1)^{i+j} \Bigg( \mathcal{S}_{i,j}(\boldsymbol{x},\boldsymbol{y}) \overline x_{m+1}^{d_j}  y_{m+1} \\
            &\qquad\qquad\quad +
            \!\!\!\sum\limits_{\boldsymbol l\in {[m]\choose {n-2}}} \!\! s_{\boldsymbol \lambda[i]}(\boldsymbol x_{\boldsymbol l,m+1})\overline{s_{\boldsymbol \lambda[j]}
                    ( \boldsymbol x_{\boldsymbol l,m+1})} |V(\boldsymbol x_{\boldsymbol l,m+1})|^2
               \left( \mathcal{T}_{j}(\boldsymbol{x},\boldsymbol{y}) +\overline x_{m+1}^{d_j}  y_{m+1} \right)\Bigg).
\end{align*}
Let
\[
    \mathcal{R}_{i,j}(\boldsymbol{x}^\prime)
        =   \sum\limits_{\boldsymbol l\in {[m]\choose {n-2}}}s_{\boldsymbol \lambda[i]}(\boldsymbol x_{\boldsymbol l,m+1})\overline{s_{\boldsymbol \lambda[j]}
                    ( \boldsymbol x_{\boldsymbol l,m+1})} |V(\boldsymbol x_{\boldsymbol l,m+1})|^2,
\]
and then we can express $\mathcal{N}_i(\boldsymbol{x}^\prime, \boldsymbol{y}^\prime)$ as
\[
    \mathcal{N}_i(\boldsymbol{x},\boldsymbol{y}) +
            \sum\limits_{j=1}^n (-1)^{i+j} \bigg(\big(\mathcal{S}_{i,j}(\boldsymbol{x},\boldsymbol{y}) +
                    \mathcal{R}_{i,j}(\boldsymbol{x}^\prime) \big)\overline x_{m+1}^{d_j}  y_{m+1}
                + \mathcal{R}_{i,j}(\boldsymbol{x}^\prime)\mathcal{T}_{j}(\boldsymbol{x},\boldsymbol{y})\bigg).
\]
Similarly, $\mathcal{D}(\boldsymbol{x}^\prime)$ can be written
\begin{align*}
           \sum\limits_{\boldsymbol k\in {[m+1]\choose n}} |s_{\boldsymbol \lambda}(\boldsymbol x_{\boldsymbol k}) V(\boldsymbol x_{\boldsymbol k})|^2
    =    \mathcal{D}(\boldsymbol{x})
            + \sum\limits_{\boldsymbol k\in {[m]\choose n-1}} |s_{\boldsymbol \lambda}(\boldsymbol x_{\boldsymbol k,m+1}) V(\boldsymbol x_{\boldsymbol k,m+1})|^2.
\end{align*}
This yields the following expression for the $a_i^\prime$ to fit to the data set $(\boldsymbol{x}^\prime, \boldsymbol{y}^\prime)$:
{
\begin{equation}
\label{eq:aiSolGen2}
    a_i^\prime
        =   \frac{\mathcal{D} a_i +
            \sum\limits_{j=1}^n (-1)^{i+j}\left( \left(\mathcal{S}_{i,j} +
                    \mathcal{R}_{i,j} \right) \overline x_{m+1}^{d_j}  y_{m+1} +  \mathcal{R}_{i,j} \mathcal{T}_{j}\right)}
            {\mathcal{D} +
                \sum\limits_{\boldsymbol k\in {[m]\choose n-1}} |s_{\boldsymbol \lambda}(\boldsymbol x_{\boldsymbol k,m+1}) V(\boldsymbol x_{\boldsymbol k,m+1})|^2}
\end{equation}}
for $i=1,\dots, n$, where $\mathcal{D} = \mathcal{D}(\boldsymbol{x})$,
$\mathcal{S}_{i,j} = \mathcal{S}_{i,j}(\boldsymbol{x})$, etc., and
$\mathcal{R}_{i,j} = \mathcal{R}_{i,j}(\boldsymbol{x}^\prime)$.


\subsection{Recursive evaluation of even polynomials of degree $\mathbf 4$}
\label{subsec:RealtimeEvenPolyDeg4}

As an illustration, let us consider the case of fitting a polynomial of the form $f(x) = a_1 x^4 + a_2 x^2 + a_3$ treated in
Section~\ref{subsec:EvenPolyDeg4}. Fitting such a polynomial to a data set
$\boldsymbol{x} = (x_1,x_2,\ldots,x_m)^T$ and $\boldsymbol{y} = (y_1,y_2,\ldots,y_m)^T$ involves computing
\begin{align*}
    \mathcal{S}_{1,1}(\boldsymbol{x})
        &=  \sum\limits_{\boldsymbol l\in {[m]\choose 2}} |x_{l_1}^2 - x_{l_2}^2|^2,
    \\
    \mathcal{S}_{1,2}(\boldsymbol{x})
        &= \overline{{\mathcal{S}_{2,1}(\boldsymbol{x})}}
        =\sum\limits_{\boldsymbol l\in {[m]\choose 2}} (\overline{x}_{l_1}^2 + \overline{x}_{l_2}^2) |x_{l_1}^2 - x_{l_2}^2|^2,
    \\
    \mathcal{S}_{1,3}(\boldsymbol{x})
        &= \overline{{\mathcal{S}_{3,1}(\boldsymbol{x})}}=\sum\limits_{\boldsymbol l\in {[m]\choose 2}} \overline{x}_{l_1}^2 \overline{x}_{l_2}^2 |x_{l_1}^2 - x_{l_2}^2|^2, \
    \\
    \mathcal{S}_{2,2}(\boldsymbol{x})
        &=  \sum\limits_{\boldsymbol l\in {[m]\choose 2}} |x_{l_1}^4 - x_{l_2}^4|^2, \\
    \mathcal{S}_{2,3}(\boldsymbol{x})&= \overline{{\mathcal{S}_{3,2}(\boldsymbol{x})}}=
          \sum\limits_{\boldsymbol l\in {[m]\choose 2}} \overline{x}_{l_1}^2 \overline{x}_{l_2}^2 (x_{l_1}^2 + x_{l_2}^2) |x_{l_1}^2 - x_{l_2}^2|^2,
    \\
    \mathcal{S}_{3,3}(\boldsymbol{x})
        &=  \sum\limits_{\boldsymbol l\in {[m]\choose 2}} |x_{l_1}|^4 |x_{l_2}|^4 |x_{l_1}^2 - x_{l_2}^2|^2,\\
    \mathcal{T}_{j}(\boldsymbol{x},\boldsymbol{y})
        &=  \sum\limits_{k=1}^m \overline x_k^{d_j}  y_k, \quad j=1,2,3; \quad (d_1, d_2, d_3) = (4, 2, 0),\\
    \mathcal{N}_i(\boldsymbol{x},\boldsymbol{y})
       & =   (-1)^i \big( - \mathcal{S}_{i,1}(\boldsymbol{x}) \mathcal{T}_1(\boldsymbol{x},\boldsymbol{y})
                + \mathcal{S}_{i,2}(\boldsymbol{x}) \mathcal{T}_2(\boldsymbol{x},\boldsymbol{y})
                - \mathcal{S}_{i,3}(\boldsymbol{x}) \mathcal{T}_3(\boldsymbol{x},\boldsymbol{y}) \big),
        \qquad i=1,2,3,
\end{align*}
and
\[
    \mathcal{D}(\boldsymbol{x})
        =   \sum\limits_{\boldsymbol k\in {[m]\choose 3}} \big| (x_{k_1}^2 - x_{k_2}^2)(x_{k_1}^2 - x_{k_3}^2)(x_{k_2}^2 - x_{k_3}^2)\big|^2.
\]
Given these quantities, one can fit the data set $\boldsymbol{x}^\prime = (\boldsymbol{x},x_{m+1})$ and
$\boldsymbol{y}^\prime = (\boldsymbol{y},y_{m+1})$ with an additional data point $(x_{m+1},y_{m+1})$ by
computing
\begin{align*}
    \mathcal{R}_{1,1}(\boldsymbol{x}^\prime)
        &=  \sum\limits_{l=1}^m |x_{l}^2 - x_{m+1}^2|^2,\\
        \mathcal{R}_{1,2}(\boldsymbol{x}^\prime)
        &=\overline{\mathcal{R}_{2,1}(\boldsymbol{x}^\prime)} = \sum\limits_{l=1}^m (\overline{x}_{l}^2 + \overline{x}_{m+1}^2) |x_{l}^2 - x_{m+1}^2|^2,
    \\
    \mathcal{R}_{1,3}(\boldsymbol{x}^\prime)
        &=\overline{\mathcal{R}_{3,1}(\boldsymbol{x}^\prime)} =  \sum\limits_{l=1}^m \overline{x}_{l}^2 \overline{x}_{m+1}^2 |x_{l}^2 - x_{m+1}^2|^2, \
    \\
    \mathcal{R}_{2,2}(\boldsymbol{x}^\prime)
        &=  \sum\limits_{l=1}^m |x_{l}^4 - x_{m+1}^4|^2, \\
    \mathcal{R}_{2,3}(\boldsymbol{x}^\prime)
        &=\overline{\mathcal{R}_{3,2}(\boldsymbol{x}^\prime)} =  \sum\limits_{l=1}^m \overline{x}_{l}^2 \overline{x}_{m+1}^2 (x_{l}^2 + x_{m+1}^2) |x_{l}^2 - x_{m+1}^2|^2,
    \\
    \mathcal{R}_{3,3}(\boldsymbol{x}^\prime)
        &=  \sum\limits_{l=1}^m |x_{l}|^4 |x_{m+1}|^4 |x_{l}^2 - x_{m+1}^2|^2,
\end{align*}
as well as the new denominator terms
\[
    \sum\limits_{\boldsymbol k\in {[m]\choose 2}} \big| (x_{k_1}^2 - x_{k_2}^2)(x_{k_1}^2 - x_{m+1}^2)(x_{k_2}^2 - x_{m+1}^2)\big|^2
\]
and applying Equation~\eqref{eq:aiSolGen2}.

Alternatively, fitting the data set $(\boldsymbol{x}, \boldsymbol{y})$, one may remember the 
formula $\boldsymbol{a} =A^+\boldsymbol{y} = BB^{*}A^{*}\boldsymbol{y}$ where the matrix $B$ is given by equation \eqref{eq:B} with
entries
\[
    B_{1,\boldsymbol{l}}
    =          \frac{ x_{l_2}^2 - x_{l_1}^2 }{\sqrt{\mathcal{D}(\boldsymbol{x})}},
    \
    B_{2,\boldsymbol{l}}
    =          \frac{ x_{l_1}^4 - x_{l_2}^4 }{\sqrt{\mathcal{D}(\boldsymbol{x})}},
    \
    B_{3,\boldsymbol{l}}
    =          \frac{ x_{l_1}^2 x_{l_2}^2 (x_{l_2}^2 - x_{l_1}^2)}{\sqrt{\mathcal{D}(\boldsymbol{x})}},
    \
    \boldsymbol l\in {[m]\choose 2}.
\]
To compute the new matrix $B^\prime$ for fitting the enlarged data set, one would compute
\[
    \mathcal{D}(\boldsymbol{x}^\prime)
        =   \mathcal{D}(\boldsymbol{x})+\sum\limits_{\boldsymbol k\in {[m]\choose 2}} \big| (x_{k_1}^2 - x_{k_2}^2)(x_{k_1}^2 - x_{m+1}^2)(x_{k_2}^2 - x_{m+1}^2)\big|^2
\]
for the denominator
as well as $3m$ additional entries $B_{i,{l,m+1}}$ for $i = 1, 2, 3$ and $l = 1,\ldots, m$.


\bibliographystyle{amsplain}
\bibliography{HHSregression}

\end{document}